\documentclass[10pt,reqno]{amsart}
\usepackage{mathtools}
\usepackage{enumitem}
\mathtoolsset{showonlyrefs}
\usepackage{graphicx}
\usepackage{indentfirst,csquotes}
\usepackage{amssymb,amsmath}
\usepackage{xcolor,hyperref,fancyhdr,etoolbox}
\usepackage{tikz}
\usetikzlibrary{positioning, shapes.geometric}
\newtheorem{theorem}{Theorem}
\newtheorem{definition}[theorem]{Definition}
\newtheorem{lemma}[theorem]{Lemma}
\newtheorem{corollary}[theorem]{Corollary}
\newtheorem{question}[theorem]{Question}
\newtheorem{remark}[theorem]{Remark}
\numberwithin{equation}{section}
\hypersetup{ colorlinks=true, linkcolor=black, filecolor=black, urlcolor=black }

\begin{document}

\title{Thin set theorem for arbitrarily many colors implies bounding}

\author{Yamato Miyata$^{1}$}
\address{$^{1}$Mathematical Institute, Tohoku University}
\email{miyata.yamato.t8@dc.tohoku.ac.jp}

\author{Keita Yokoyama$^{2}$}
\address{$^{2}$Mathematical Institute, Tohoku University}
\email{keita.yokoyama.c2@tohoku.ac.jp}

\date{\today}

\begin{abstract}
The thin set theorem $\mathsf{RT}_{<\infty,\ell}^{n}$ asserts that for every natural number $k$, each coloring $c\colon[\mathbb{N}]^n \to \{0,1,\dots,k-1\}$ admits an infinite set $H$ such that $|c([H]^n)| \le \ell$. Within the framework of the reverse mathematics of second-order arithmetic, $\mathsf{RT}_{<\infty,\ell}^{n}$ implies the $\Sigma_{n+1}^{0}$-bounding principle ($\mathsf{B}\Sigma_{n+1}^{0}$) over $\mathsf{RCA}_0$ for all natural numbers $n, \ell \ge 1$.
\end{abstract}
\maketitle
\tableofcontents

\section{Introduction}
Ramsey's theorem for $n$-tuples and any $k$ colors, denoted by $\mathsf{RT}_{<\infty}^{n}$, states that every coloring $c\colon[\mathbb{N}]^n \to \{0,1,\dots,k-1\}$ admits an infinite set $H$ such that $c([H]^n) = \{i\}$ for some $i < k$.

In the context of reverse mathematics, $\mathsf{RT}_{<\infty}^{2}$ is a prime example of a theorem that is not equivalent to any of the core systems known as the ``\textbf{Big Five}'' \cite{Hirst,Liu}. Furthermore, Ramsey's theorem also exhibits unusual behavior regarding induction. For example, Hirst \cite[Theorem 6.11]{Hirst} found that $\mathsf{RT}_{<\infty}^{2}$ derives $\mathsf{B}\Sigma_{3}^{0}$ over $\mathsf{RCA}_0$, and Slaman and the second author \cite{Slaman-Yokoyama} found that $\mathsf{RT}_{<\infty}^{2}$ does not derive $\mathsf{I}\Sigma_{3}^{0}$.
Moreover, Patey and the second author \cite{Patey-Yokoyama} showed that $\mathsf{RCA}_0+\mathsf{RT}_{2}^{2}$ is $\Pi_{3}^{0}$-conservative over $\mathsf{RCA}_0+\mathsf{B}\Sigma_{2}^{0}$. (Note that $\mathsf{RT}_{2}^{2}$ states that every computable coloring $c\colon[\mathbb{N}]^2\to\{0,1\}$ admits an infinite set $H$ such that $c([H]^2)=\{i\}$ for some $i<2$.) This result was recently improved to $\Pi_{4}^{0}$-conservativity by Le Hou\'erou, Patey and the second author \cite{Houerou-Patey-Yokoyama}.
Thus, its relationships with the \textbf{Big Five} and with induction principles have been investigated in considerable detail. Refer to Hirschfeldt \cite{Hirschfeldt} for standard topics regarding the reverse mathematics of Ramsey's theorem.

Historically, the original thin set theorem asserts that for any coloring $c \colon [\mathbb{N}]^n \to \mathbb{N}$, there exists an infinite set $H$ such that $c([H]^n) \neq \mathbb{N}$ (the image avoids at least one color). By convention, however, we usually refer to the following stronger statement as the thin set theorem:

\begin{definition}
For any natural numbers $n, \ell \ge 1$, $\mathsf{RT}_{<\infty,\ell}^{n}$ is the statement that for every $k$ and every coloring $c \colon [\mathbb{N}]^n \to \{0,1,\dots,k-1\}$, there exists an infinite set $H \subseteq \mathbb{N}$ such that $|c([H]^n)| \le \ell$. For a coloring $c$, any infinite set $H$ satisfying this condition is called an (infinite) $\ell$-homogeneous set for $c$.
\end{definition}

Wang \cite{Wang} investigated the computability-theoretic aspects of this principle. Recently, Cholak and Patey \cite{Cholak-Patey} proved a striking result: for all natural numbers $n \ge 3$, $\mathsf{RCA}_0 \vdash \mathsf{ACA}_0 \leftrightarrow \mathsf{RT}_{<\infty,\ell}^{n}$ if and only if $\ell < d_{n-1}$, where $(d_n)_{n \in \omega}$ denotes the Catalan numbers defined by $d_0 = 1$ and $d_{n+1} = \sum_{i=0}^{n} d_i \cdot d_{n-i}$. This result is based on a consideration of the combinatorial and computational constraints of color avoidance.

Although the computability-theoretic aspects of the thin set theorem and its relationship with the \textbf{Big Five} have been actively investigated, its connection to mathematical induction remains largely unexplored. In this paper, we analyze and evaluate the inductive strength of the thin set theorem.
Our main result is to establish the lower bound on its inductive strength over $\mathsf{RCA}_0$. Specifically, we show that for all natural numbers $\ell\ge 1$, $\mathsf{RT}_{<\infty,\ell}^{n}$ derives $\mathsf{B}\Sigma_{n+1}^{0}$ over $\mathsf{RCA}_0$.

\section{Inductive strength of the thin set theorem}
In the following discussion, non-essential parameters within formulas are omitted. Here, we assume that finite sets and the basic operations $\in,\subseteq$ on them are encoded using natural numbers.
\begin{definition}\label{def:2}
For any natural numbers $n,\ell \ge 1$, we define the following principles (Note that (3) and (4) are finitely axiomatizable):

\begin{enumerate}[topsep=0.5em,itemsep=0.25em]
\item We say that a (total) $\Sigma_{n}^{0}$ formula $\varphi(x,y)$ defines a total function (we denote $\varphi\colon\mathbb{N}\to\mathbb{N}$), when $\varphi$ is expressed by a $\Sigma_{n}^{0}$ formula and satisfies $\forall x \, \exists! y \, \varphi(x,y)$. In particular, when $k$ is fixed, if $\forall x \, \exists! y < k \, \varphi(x,y)$, we denote $\varphi \colon \mathbb{N}\to \{0,1,\dots,k-1\}$.

\item For any $k$ and any total $\Sigma_{n}^{0}$-function $\varphi\colon\mathbb{N}\to \{0,1,\dots,k-1\}$, we say that a set $H$ is an (infinite) $\ell$-homogeneous set for $\varphi$ if $H$ is an infinite set and it satisfies the following:
\[ \exists s\subseteq\{0,1,\dots,k-1\} \, ( |s|\le \ell \land \forall x \, \exists y\in H \, (x<y) \land \forall h\in H \, \exists i\in s \, \varphi(h,i) ). \]

\item $\Sigma_{n}^{0}\text{-}\mathsf{RT}_{<\infty,\ell}^{1}$ is the assertion: For any $k$ and any total $\Sigma_{n}^{0}$-function $\varphi\colon\mathbb{N}\to\{0,1,\dots,k-1\}$, $\varphi$ has an $\ell$-homogeneous set.
  
In particular, $\Sigma_{n}^{0}\text{-}\mathsf{RT}_{<\infty,1}^{1}$ denotes $\Sigma_{n}^{0}\text{-}\mathsf{RT}_{<\infty}^{1}$.
  
\item $\Sigma_{n}^{0}\text{-}\mathsf{RT}_{<\infty,\ell}^{1-}$ ($\Sigma_{n}^{0}\text{-}\mathsf{RT}_{<\infty,\ell}^{1}$-minus) is the assertion:
For every $k$, every total $\Sigma_{n}^{0}$-function $\varphi\colon\mathbb{N}\to\{0,1,\dots,k-1\}$ satisfies the following condition ($\ell$-homogeneity property):
\[ \exists s \subseteq \{0, 1, \dots ,k-1\} \, (|s|\le\ell \land \forall a \, \exists b \, [a < b \land \exists i \in s \, \varphi(b,i)] ). \]
In particular, $\Sigma_{n}^{0}\text{-}\mathsf{RT}_{<\infty,1}^{1-}$ denotes $\Sigma_{n}^{0}\text{-}\mathsf{RT}_{<\infty}^{1-}$.
\end{enumerate}

\end{definition}

Note that over $\mathsf{RCA}_0$, it holds that $\Sigma_{1}^{0}\text{-}\mathsf{RT}_{<\infty,\ell}^{1-}\leftrightarrow\Sigma_{1}^{0}\text{-}\mathsf{RT}_{<\infty,\ell}^{1}\leftrightarrow\mathsf{RT}_{<\infty,\ell}^{1}$ by $\Delta_{1}^{0}\text{-}\mathsf{CA}_0$ for any natural number $\ell \ge 1$.

\begin{remark}
For each natural number $n,\ell \ge 1$, the claim $\Sigma_{n}^{0}\text{-}\mathsf{RT}_{<\infty,\ell}^{1}$ asserts that an $\ell$-homogeneous set can be taken. On the other hand, $\Sigma_{n}^{0}\text{-}\mathsf{RT}_{<\infty,\ell}^{1-}$ merely asserts that the condition of $\ell$-homogeneity ($\ell$-homogeneity property) is satisfied. 
Consequently, $\Sigma_{n}^{0}\text{-}\mathsf{RT}_{<\infty,\ell}^{1-}$ is a weaker assertion than $\Sigma_{n}^{0}\text{-}\mathsf{RT}_{<\infty,\ell}^{1}$.
\end{remark}
We now introduce the main theorem.
\begin{theorem}\label{thm:main}
The following hold.

\begin{enumerate}[topsep=0.5em,itemsep=0.5em]
\item For any natural numbers $n, \ell \ge 1$,
\[ \mathsf{RCA}_0\vdash\mathsf{RT}_{<\infty,\ell}^{n}\rightarrow\Sigma_{n}^{0}\text{-}\mathsf{RT}_{<\infty,\ell}^{1}. \]

\item For any natural numbers $n, \ell \ge 1$,
\[
\mathsf{RCA}_0\vdash\mathsf{RT}_{<\infty,\ell}^{n}\rightarrow\mathsf{B}\Sigma_{n+1}^{0}.
\]
\end{enumerate}
\end{theorem}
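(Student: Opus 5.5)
The plan is to prove (1) by encoding a $\Sigma^0_n$ coloring of singletons as an $n$-ary computable coloring via iterated limits. Then (2) follows from (1) together with the known characterization of $\mathsf{B}\Sigma^0_{n+1}$ by a pigeonhole principle for $\Sigma^0_n$-definable colorings. I would argue by (meta-)induction on $n$. Note that $\mathsf{RT}^{n}_{<\infty,\ell}$ implies $\mathsf{RT}^{n-1}_{<\infty,\ell}$ over $\mathsf{RCA}_0$ (apply it to $c'(x_1,\dots,x_n)=c(x_1,\dots,x_{n-1})$). So, inductively using (2) for $n-1$, we may assume $\mathsf{B}\Sigma^0_n$ when proving (1) for $n$.

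For (1), fix $k$ and a total $\Sigma^0_n$ function $\varphi\colon\mathbb{N}\to\{0,\dots,k-1\}$. Since $\varphi$ is total with values below $k$, under $\mathsf{B}\Sigma^0_n$ the graph of $\varphi$ is $\Delta^0_n$: $\varphi(x)\neq i$ iff $\exists j<k\,(j\neq i\wedge\varphi(x,j))$, and bounding keeps this $\Sigma^0_n$. I would then establish a formalized iterated limit lemma inside $\mathsf{RCA}_0+\mathsf{B}\Sigma^0_n$. It should give a $\Delta^0_1$ function $g(x,s_1,\dots,s_{n-1})<k$ with
\[ \varphi(x)=\lim_{s_1}\lim_{s_2}\cdots\lim_{s_{n-1}} g(x,s_1,\dots,s_{n-1}), \]
in the sense that each inner limit exists and is computed along any increasing sequence. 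The natural choice of $g$ guesses the least $i<k$ together with the least witness of the outer $\exists$ of $\varphi(x,i)$ visible at stage $s_1$, recursively evaluating the remaining $\Pi^0_{n-1}$ part by the next $n-2$ limits. Define the coloring $c(x,s_1,\dots,s_{n-1})=g(x,s_1,\dots,s_{n-1})$ on $[\mathbb{N}]^n$ (for $x<s_1<\dots<s_{n-1}$) and apply $\mathsf{RT}^n_{<\infty,\ell}$ to obtain an infinite $H$ and $s$ with $|s|\le\ell$ and $c([H]^n)\subseteq s$. For $h\in H$, evaluating the iterated limit along elements of $H$ above $h$ gives $\varphi(h)$, and every approximating value lies in $s$, so $\varphi(h)\in s$. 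Thus $H$ is $\ell$-homogeneous for $\varphi$.

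The main obstacle is precisely this formalized limit lemma: the existence of the nested limits for all $x$ uses $\mathsf{B}\Sigma^0_n$, e.g. to bound the witnesses for the finitely many candidate colors $i<k$. I would first prove the case $n=2$ carefully, where $\mathsf{B}\Sigma^0_2$ bounds the $k$ relevant witnesses so that the guess stabilizes. I would then iterate by relativizing the $\Sigma^0_{n-1}$ matrix, arguing that limits taken along the subsequence $H$ agree with genuine limits.

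For (2), I would chain $\Sigma^0_n\text{-}\mathsf{RT}^1_{<\infty,\ell}\to\Sigma^0_n\text{-}\mathsf{RT}^{1-}_{<\infty,\ell}\to\Sigma^0_n\text{-}\mathsf{RT}^{1-}_{<\infty}\to\mathsf{B}\Sigma^0_{n+1}$. The first implication is immediate. For the second, suppose some $\varphi$ has all color classes bounded. Then for any $s$ with $|s|\le\ell$, the union of the at most $\ell$ bounded classes is bounded, taking the maximum of $\ell$ bounds for the standard $\ell$. So $\varphi$ also fails the $\ell$-homogeneity property. For the last step, assume $\neg\mathsf{B}\Sigma^0_{n+1}$, i.e.\ there are $a$ and a $\Pi^0_n$ (equivalently $\Sigma^0_n$ with an extra parameter) formula $\theta$ such that $\forall i<a\,\exists y\,\theta(i,y)$ but the witnesses are unbounded. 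Define $\varphi(x)=$ the least $i<a$ such that no $y\le x$ is the least witness for $i$. This is a total $\Sigma^0_n$ function into $a$ colors, and each of its color classes is bounded, contradicting $\Sigma^0_n\text{-}\mathsf{RT}^{1-}_{<\infty}$. Checking the definability of $\varphi$ may require adjusting $\theta$ to a minimal-witness form, which is standard.
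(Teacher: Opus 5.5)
Your plan follows the paper's proof in outline. It uses the same meta-induction on $n$, getting $\mathsf{B}\Sigma^0_n$ from $\mathsf{RT}^{n}_{<\infty,\ell}\to\mathsf{RT}^{n-1}_{<\infty,\ell}$ and the previous level. It uses the same limit lemma, built from the ``least surviving $\langle$witness, color$\rangle$ at stage $s$'' guess, and the same evaluation of nested limits along elements of $H$ (Lemma~\ref{lem:hom_eval}). For (2) it uses the same chain through $\Sigma^0_n\text{-}\mathsf{RT}^{1-}_{<\infty,\ell}$ and $\Sigma^0_n\text{-}\mathsf{RT}^{1-}_{<\infty}$ (Theorem~\ref{thm:equivalence}).

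The one step that fails as written is the coloring in the last implication. With $\theta\in\Pi^0_n$, the statement $\varphi(x)=i$ for your $\varphi$ says $\forall y\le x\,\neg\theta(i,y)\wedge\forall j<i\,\exists y\le x\,\theta(j,y)$. The second conjunct is $\Pi^0_n$, so $\varphi$ is not a $\Sigma^0_n$-function. Passing to a ``minimal-witness form'' does not help, since ``$y$ is the least witness for $i$'' is itself $\Pi^0_n\wedge\Sigma^0_n$.

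The missing idea is to minimize over pairs rather than over colors, which is exactly the function $c(v)$ in the paper's proof of Theorem~\ref{thm:equivalence}. Write $\theta(i,y)\equiv\forall z\,\eta(i,y,z)$ with $\eta\in\Sigma^0_{n-1}$. Let $c(x)$ be the least $\langle i,m\rangle$ with $i<a$ and $\forall y\le x\,\exists z<m\,\neg\eta(i,y,z)$. This is well defined and has the right complexity:
\begin{enumerate}
\item Failure of bounding (with bound $x+1$) gives some $i<a$ with $\forall y\le x\,\exists z\,\neg\eta(i,y,z)$.
\item $\mathsf{B}\Sigma^0_n$ (equivalently $\mathsf{B}\Pi^0_{n-1}$) bounds these $z$ by some $m$.
\item $\mathsf{L}\Pi^0_{n-1}$, which follows from $\mathsf{I}\Sigma^0_{n-1}$, yields the least such pair.
\item After absorbing bounded quantifiers with $\mathsf{B}\Sigma^0_{n-1}$, the graph of $c$ is a conjunction of a $\Pi^0_{n-1}$ and a $\Sigma^0_{n-1}$ formula. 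Hence it is $\Delta^0_n$, and $c$ is a total $\Sigma^0_n$-function.
\end{enumerate}
Now color $x$ by $(c(x))_0<a$. If some color $i$ occurred unboundedly often, then $\forall y\,\exists z\,\neg\eta(i,y,z)$, contradicting $\exists y\,\theta(i,y)$. So every color class is bounded, contradicting $\Sigma^0_n\text{-}\mathsf{RT}^{1-}_{<\infty}$.
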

While it is widely known that the above statement $(1)$ holds in $\omega$-models or within strong enough induction, here, it is demonstrated within $\mathsf{RCA}_0$, \textit{i.e.}, just with $\Sigma^{0}_{1}$-induction.   
We also note that we have not demonstrated that $\mathsf{RCA}_0+\mathsf{RT}_{<\infty,\ell}^{n}\vdash\Sigma_{n}^{0}\text{-}\mathsf{RT}_{<\infty}^{1}$.
By Theorem \ref{thm:main}, the following ensues.

\begin{corollary}\label{corollary}
For each natural number $\ell \ge 1$, the first-order part of $\mathsf{RCA}_0+\mathsf{RT}_{<\infty,\ell}^{2}$ is the same as $\mathsf{RCA}_0+\mathsf{B}\Sigma_{3}^{0}$.
In particular, for each natural number $\ell \ge 1$,
\[\mathsf{RCA}_0\not\vdash\mathsf{RT}_{2}^{2}\rightarrow\mathsf{RT}_{<\infty,\ell}^{2}.\]
\end{corollary}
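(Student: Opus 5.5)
The corollary has two halves, a lower bound and an upper bound on the first-order part. The lower bound comes from Theorem~\ref{thm:main}(2) with $n=2$. The upper bound needs a conservation result for $\mathsf{RT}^2_{<\infty,\ell}$ over $\mathsf{B}\Sigma^0_3$, which I will import from the literature. The non-implication then follows by comparing first-order parts.

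For the lower bound, Theorem~\ref{thm:main}(2) with $n=2$ gives $\mathsf{RCA}_0+\mathsf{RT}^2_{<\infty,\ell}\vdash\mathsf{B}\Sigma^0_3$. Hence every first-order consequence of $\mathsf{RCA}_0+\mathsf{B}\Sigma^0_3$ is a consequence of $\mathsf{RCA}_0+\mathsf{RT}^2_{<\infty,\ell}$.

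For the upper bound, I use that $\mathsf{RT}^2_{<\infty,\ell}$ is implied by $\mathsf{RT}^2_{<\infty}$ over $\mathsf{RCA}_0$. Indeed, an $\mathsf{RT}^2_{<\infty}$-homogeneous set $H$ satisfies $|c([H]^2)|=1\le\ell$. So the first-order part of $\mathsf{RCA}_0+\mathsf{RT}^2_{<\infty,\ell}$ is contained in that of $\mathsf{RCA}_0+\mathsf{RT}^2_{<\infty}$. I then invoke the known theorem, proved by Slaman and the second author \cite{Slaman-Yokoyama} via a Seetapun/Cholak--Jockusch--Slaman style model construction over models of $\mathsf{B}\Sigma^0_3$, that $\mathsf{RCA}_0+\mathsf{RT}^2_{<\infty}$ is $\Pi^1_1$-conservative over $\mathsf{RCA}_0+\mathsf{B}\Sigma^0_3$. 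Combining the two inclusions shows that the first-order parts coincide. The main obstacle is exactly this conservation input. If the citation is taken as given, the step is immediate. If it had to be reproved, one would build, inside a countable model of $\mathsf{B}\Sigma^0_3$, an $\omega$-extension closed under solutions to $\mathsf{RT}^2_{<\infty}$ instances while preserving $\mathsf{B}\Sigma^0_3$, and that construction is the hard part.

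For the non-implication, recall from the introduction that $\mathsf{RCA}_0+\mathsf{RT}^2_2$ is $\Pi^0_3$-conservative over $\mathsf{RCA}_0+\mathsf{B}\Sigma^0_2$ \cite{Patey-Yokoyama}. The principle $\mathsf{B}\Sigma^0_3$ is equivalent to a $\Pi^0_4$ sentence scheme, so I use the stronger $\Pi^0_4$-conservativity \cite{Houerou-Patey-Yokoyama} together with the classical fact that $\mathsf{I}\Sigma_1+\mathsf{B}\Sigma_2\nvdash\mathsf{B}\Sigma_3$. The latter holds because $\mathsf{B}\Sigma_3$ proves $\mathrm{Con}(\mathsf{I}\Sigma_1)$-type statements, or by Paris--Kirby separation $\mathsf{I}\Sigma_2\nvdash\mathsf{B}\Sigma_3$.

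Now suppose $\mathsf{RCA}_0\vdash\mathsf{RT}^2_2\rightarrow\mathsf{RT}^2_{<\infty,\ell}$. By Theorem~\ref{thm:main}(2), $\mathsf{RCA}_0+\mathsf{RT}^2_2\vdash\mathsf{B}\Sigma^0_3$. One instance of $\mathsf{B}\Sigma^0_3$ fails in some model of $\mathsf{I}\Sigma_1+\mathsf{B}\Sigma_2$, and that instance is a $\Pi^0_4$ sentence. By $\Pi^0_4$-conservativity it would then be provable in $\mathsf{RCA}_0+\mathsf{B}\Sigma^0_2$, a contradiction. Alternatively, it suffices to use $\mathsf{B}\Sigma^0_3\to\mathsf{I}\Sigma^0_2$ and note that $\mathsf{RT}^2_2\nvdash\mathsf{I}\Sigma^0_2$ already follows from $\Pi^0_3$-conservativity, since $\mathsf{I}\Sigma_2$ has $\Pi_3$ instances not provable in $\mathsf{B}\Sigma_2$. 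This simpler route is the one I would write.
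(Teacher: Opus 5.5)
Your proposal is correct and follows the paper's proof. The lower bound is Theorem~\ref{thm:main}(2) with $n=2$, and the upper bound is $\mathsf{RT}^2_{<\infty}\rightarrow\mathsf{RT}^2_{<\infty,\ell}$ together with the Slaman--Yokoyama conservation theorem. The non-implication goes through $\mathsf{B}\Sigma^0_3\rightarrow\mathsf{I}\Sigma^0_2$ and $\mathsf{RCA}_0+\mathsf{RT}^2_2\not\vdash\mathsf{I}\Sigma^0_2$. The only difference is in that last fact: the paper cites Chong--Slaman--Yang directly, while you derive it from $\Pi^0_3$-conservation. That derivation is legitimate, witnessed for example by $\mathrm{Con}(\mathsf{I}\Sigma_1)$ or by the totality of the Ackermann function.

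You should discard your first route for the non-implication, however, because it rests on a false claim. $\mathsf{B}\Sigma^0_3$ is not equivalent to any $\Pi^0_4$ scheme; it is naturally $\Pi^0_5$-axiomatized. No $\Pi_4$ axiomatization can exist, since $\mathsf{B}\Sigma_3$ is $\Pi_4$-conservative over $\mathsf{I}\Sigma_2$ while $\mathsf{I}\Sigma_2\not\vdash\mathsf{B}\Sigma_3$. So the step asserting that the failing instance of $\mathsf{B}\Sigma^0_3$ is a $\Pi^0_4$ sentence does not hold.
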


\begin{proof}
By Theorem \ref{thm:main}, for any natural number $\ell \ge 1$, $\mathsf{RT}_{<\infty,\ell}^{2}$ derives $\mathsf{B}\Sigma_{3}^{0}$ over $\mathsf{RCA}_0$. Furthermore, by Slaman and the second author \cite{Slaman-Yokoyama}, the first-order part of $\mathsf{RCA}_0+\mathsf{RT}_{<\infty}^{2}$ is the same as $\mathsf{RCA}_0+\mathsf{B}\Sigma_{3}^{0}$. Therefore, for any natural numbers $\ell\ge 1$, the first-order part of $\mathsf{RCA}_0+\mathsf{RT}_{<\infty,\ell}^{2}$ is the same as $\mathsf{RCA}_0+\mathsf{B}\Sigma_{3}^{0}$.
Also, by Chong, Slaman and Yang \cite{Chong-Slaman-Yang}, $\mathsf{RCA}_0+\mathsf{RT}_{2}^{2}$ does not imply $\mathsf{I}\Sigma_{2}^{0}$.
Then, the latter statement immediately follows.
\end{proof}
Here, by Wang \cite[Corollary 3.4]{Wang}, it immediately follows that for any natural numbers $3\le n$ and $1\le\ell$, $\mathsf{RCA}_0\not\vdash\mathsf{RT}_{2}^{2}\rightarrow\mathsf{RT}_{<\infty,\ell}^{n}$. Corollary \ref{corollary} effectively shows an enhanced version of this.

Here, in relation to Theorem \ref{thm:main}, we prove the following theorem.
\begin{theorem}\label{thm:equivalence}
For any natural numbers $n,\ell \ge 1$,
\[\mathsf{RCA}_0\vdash\mathsf{B}\Sigma_{n+1}^{0}\leftrightarrow \Sigma_{n}^{0}\text{-}\mathsf{RT}_{<\infty}^{1-}\leftrightarrow\Sigma_{n}^{0}\text{-}\mathsf{RT}_{<\infty,\ell}^{1-}.\]
\end{theorem}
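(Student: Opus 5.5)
We will prove the cycle of implications
\[ \mathsf{B}\Sigma_{n+1}^{0}\rightarrow\Sigma_{n}^{0}\text{-}\mathsf{RT}_{<\infty}^{1-}\rightarrow\Sigma_{n}^{0}\text{-}\mathsf{RT}_{<\infty,\ell}^{1-}\rightarrow\mathsf{B}\Sigma_{n+1}^{0} \]
over $\mathsf{RCA}_0$. The middle implication is immediate: if $s$ witnesses the $1$-homogeneity property, then $|s|\le 1\le\ell$, so the same $s$ witnesses the $\ell$-homogeneity property.

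For $\mathsf{B}\Sigma_{n+1}^{0}\rightarrow\Sigma_{n}^{0}\text{-}\mathsf{RT}_{<\infty}^{1-}$, argue by contradiction. Let $\varphi\colon\mathbb{N}\to\{0,\dots,k-1\}$ be a total $\Sigma_n^0$-function and suppose no singleton $\{i\}$ has the homogeneity property. Then
\[ \forall i<k\,\exists a\,\forall b>a\,\neg\varphi(b,i). \]
The matrix $\forall b>a\,\neg\varphi(b,i)$ is $\Pi_n^0$. Applying $\mathsf{B}\Sigma_{n+1}^{0}$, in its equivalent $\mathsf{B}\Pi_n^0$ form, yields a single bound $a_0$ with
\[ \forall i<k\,\exists a<a_0\,\forall b>a\,\neg\varphi(b,i). \]
Now take $b=a_0$. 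By totality there is $i<k$ with $\varphi(a_0,i)$. The corresponding $a<a_0$ satisfies $a_0>a$, so $\neg\varphi(a_0,i)$, a contradiction.

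The main step is $\Sigma_{n}^{0}\text{-}\mathsf{RT}_{<\infty,\ell}^{1-}\rightarrow\mathsf{B}\Sigma_{n+1}^{0}$. We use the equivalent form of $\mathsf{B}\Sigma_{n+1}^0$ stating that no $\Sigma_{n+1}^0$-definable, in fact $\Pi_n^0$-graph, function maps a bounded set $\{0,\dots,k-1\}$ cofinally into $\mathbb{N}$. Suppose $\mathsf{B}\Sigma_{n+1}^0$ fails. Then there are $k$ and a $\Pi_n^0$ formula $\theta$ such that $\forall i<k\,\exists a\,\theta(i,a)$, yet for every bound $c$ some $i<k$ has no witness $a<c$. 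Define, for $i<k$, $m_i=$ the least $a$ with $\theta(i,a)$. Then $\{m_i\}_{i<k}$ is unbounded.

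Now define a coloring $\varphi(x,i)$ by: $i$ is the least index with $m_i$ maximal among those $m_j\le x$, or $i=0$ when no $m_j\le x$. Intuitively, $x$ is colored by which $m_i$ it sits just above. To make this total and $\Sigma_n^0$ we must express, using only $\Delta_{n}$-style information, the finite set $\{j<k: m_j\le x\}$. We plan to express ``$m_j\le x$'' as $\exists a\le x\,\theta(j,a)$, which is $\Pi_n^0$ under $\mathsf{B}\Pi_{n-1}^0$; the latter is available from $\mathsf{B}\Sigma_n^0$ by induction on $n$, starting from $\mathsf{B}\Sigma_1^0$ given by $\mathsf{RCA}_0$. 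Its negation is $\Sigma_n^0$. The finite set itself is then coded by a number, and $\varphi(x,i)$ becomes
\[ \exists \sigma\,[\sigma=\{j<k:m_j\le x\}\wedge i=\ldots]. \]
This is $\Sigma_n^0$ only if the correctness of $\sigma$ is, and that is the delicate point. We expect to fix it by choosing $\theta$ so that the minimization is $\Delta_n^0$ in the parameter, e.g.\ by replacing $\theta$ with a $\Sigma_{n}^0$ graph, standard via a failure of $\mathsf{B}\Sigma_{n+1}^0$ yielding a $\Sigma_n^0$ cofinal map on $k$. Since $\mathsf{B}\Sigma_{n+1}^0$ is equivalent to the statement that $\Sigma_{n}^0$-definable functions on bounded domains are bounded, we may assume $\mathrm{graph}(f)$ is $\Sigma_n^0$, and then define $\varphi$ by stages using the $\Sigma_n^0$ graph directly.

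Given the coloring, apply $\Sigma_{n}^{0}\text{-}\mathsf{RT}_{<\infty,\ell}^{1-}$ to obtain $s$ with $|s|\le\ell$ such that colors in $s$ occur cofinally. Each color $i$ can only appear on $x$ with $x<\max_j m_j$ among the $m_j$ exceeding $m_i$. Otherwise $i$ would have to be overtaken, and the blocks are finite because $m_i$ is finite. So the finite set $s$ gives a bound, namely the least $m_j$ above $\max_{i\in s}m_i$. That $j$ exists by unboundedness, and beyond it no color of $s$ appears, contradicting the cofinality. Exhibiting this bound only requires finitely many ($\le\ell$) witnesses, which is legitimate in $\mathsf{RCA}_0$.
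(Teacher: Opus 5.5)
\textbf{Verdict: there is a genuine gap, in the main step.} Your first two implications are correct and match the paper. These are bounding the last occurrences of all $k$ colors via $\mathsf{B}\Pi_n^0$, and the trivial passage from one color to $\ell$.

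The gap is in $\Sigma_{n}^{0}\text{-}\mathsf{RT}_{<\infty,\ell}^{1-}\rightarrow\mathsf{B}\Sigma_{n+1}^{0}$. You never show your coloring is a total $\Sigma_n^0$-function, and as defined it is not one.
\begin{enumerate}
\item Taking $m_i$ to be the least $a$ with $\theta(i,a)$, for $\theta\in\Pi_n^0$, uses $\mathsf{L}\Pi_n^0$, i.e.\ $\mathsf{I}\Sigma_n^0$.
\item The existence of the finite set $\{j<k : \exists a\le x\,\theta(j,a)\}$ is bounded $\Pi_n^0$-comprehension, again $\mathsf{I}\Sigma_n^0$.
\end{enumerate}
At this point you only have $\mathsf{B}\Sigma_n^0$ (inductively), and $\mathsf{I}\Sigma_n^0$ is itself a consequence of the goal. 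Even granting the set, the clause ``$\sigma=\{j<k: m_j\le x\}$'' is a conjunction of a $\Pi_n^0$ and a $\Sigma_n^0$ condition. So $\varphi$ is naturally only $\Delta_{n+1}^0$, one level too high for the hypothesis.

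Your proposed repair rests on a false claim. Suppose $f\colon\{0,\dots,k-1\}\to\mathbb{N}$ is total with $\Sigma_n^0$ graph. Then $\forall i<k\,\exists a\,(f(i)=a)$ has the form $\forall i<k\,\Sigma_n^0$, so $\mathsf{B}\Sigma_n^0$, which you are already using, bounds $f$. Thus ``$\Sigma_n^0$-graph functions on bounded domains are bounded'' is an instance of $\mathsf{B}\Sigma_n^0$, not an equivalent of $\mathsf{B}\Sigma_{n+1}^0$. A failure of $\mathsf{B}\Sigma_{n+1}^0$ can never be witnessed by such a map.

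The missing idea is to color $v$ by one \emph{failing} index, rather than by the whole pattern of indices already witnessed. Write the failure of $\mathsf{B}\Pi_n^0$ as $\forall x<w\,\exists y\,\forall z\,\theta(x,y,z)$ with $\theta\in\Sigma_{n-1}^0$, together with $\forall v\,\exists x<w\,\forall y<v\,\exists z\,\neg\theta(x,y,z)$. For each $v$, $\mathsf{B}\Sigma_n^0$ bounds the counterexamples $z$ by some $m$. Let $d(v)$ be the first component of the least $\langle x,m\rangle$ with $x<w\wedge\forall y<v\,\exists z<m\,\neg\theta(x,y,z)$. This condition is $\Pi_{n-1}^0$, so only $\mathsf{L}\Pi_{n-1}^0$ (available from $\mathsf{B}\Sigma_n^0$) is needed. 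Hence $d$ is a total $\Sigma_n^0$ (even $\Delta_n^0$) function into $w$.

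Now take $s$ with $|s|\le\ell$ from the hypothesis. For each of the at most $\ell$ colors $x\in s$, choose a witness $y_x$ with $\forall z\,\theta(x,y_x,z)$. No $v>\max_{x\in s}y_x$ can receive a color from $s$, which is a contradiction. The paper splits this into two steps: $\ell$ colors $\to$ one color by the same finite-maximum trick, then one color $\to\mathsf{B}\Pi_n^0$ using $d$. Your final block-counting argument would be fine for a legitimate coloring, but the coloring is exactly where the work lies.
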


\begin{proof}[Proof of Theorem \ref{thm:equivalence}]
Here, recall that for any natural numbers $n \ge 1$, $\mathsf{B}\Pi_{n}^{0}$ is equivalent to $\mathsf{B}\Sigma_{n+1}^{0}$ over $\mathsf{RCA}_0$.
Then, by considering inductively, it suffices to show that for any natural numbers $n,\ell \ge 1$,
\[\mathsf{RCA}_0+\mathsf{B}\Sigma_{n}^{0}\vdash\mathsf{B}\Pi_{n}^{0}\leftrightarrow\Sigma_{n}^{0}\text{-}\mathsf{RT}_{<\infty}^{1-}\leftrightarrow\Sigma_{n}^{0}\text{-}\mathsf{RT}_{<\infty,\ell}^{1-}.
\]
The proof is carried out in two steps.

\begin{enumerate}[label=(\roman*),topsep=0.5em,itemsep=0.5em]
\item First, we prove the equivalence \[\mathsf{RCA}_0+\mathsf{B}\Sigma_{n}^{0}\vdash\mathsf{B}\Pi_{n}^{0}\leftrightarrow\Sigma_{n}^{0}\text{-}\mathsf{RT}_{<\infty}^{1-}.\]

For the backward direction $(\leftarrow)$, we proceed by contradiction. Assume $\Sigma_{n}^{0}\text{-}\mathsf{RT}_{<\infty}^{1-}\land\neg\mathsf{B}\Pi_{n}^{0}$. Then, by $\neg\mathsf{B}\Pi_{n}^{0}$, there exists a $\Pi_{n}^{0}$ formula $\forall z \, \theta(x,y,z)$ (where $\theta$ is a $\Sigma_{n-1}^{0}$ formula) and a witness $w$ satisfying the following two conditions:
\begin{equation}
\forall x < w \, \exists y \, \forall z \, \theta(x,y,z) \label{lem:equivalence-1}
\end{equation}

\begin{equation} \forall v \, \exists x < w \, \forall y < v \, \exists z \, \neg\theta(x,y,z) \label{lem:equivalence-2}
\end{equation}
Let $v$ be arbitrary. By \eqref{lem:equivalence-2}, there exists an $x_0<w$ that satisfies the following:
\[
\forall y < v \, \exists z \, \neg\theta(x_0,y,z)
\]
Over $\mathsf{RCA}_0$, $\neg\theta$ is equivalent to a $\Pi_{n-1}^{0}$ formula. Thus, by $\mathsf{B}\Sigma_{n}^{0}$, the following bounds exist:
\[
\exists m \, \forall y < v \, \exists z < m \, \neg\theta(x_0,y,z)
\]
Thus, we have:
\begin{equation} \exists x < w \, \exists m \, \forall y < v \, \exists z < m \, \neg\theta(x,y,z) \label{lem:equivalence-3}
\end{equation}

We can then define a function $c(v)$ by:

\begin{equation} c(v)=\min\{ \langle x,m \rangle \mid x < w \land \forall y < v \, \exists z < m \, \neg\theta(x,y,z) \}.
\end{equation}
By \eqref{lem:equivalence-3} and the least number principle for $\Pi_{n-1}^{0}$ formulas ($\mathsf{L}\Pi_{n-1}^{0}$), which follows from $\mathsf{B}\Sigma_{n}^{0}$, $c$ is a total function.

Moreover, since $c$ is defined via bounded quantification and logical connectives applied to $\Sigma_{n-1}^{0}$ and $\Pi_{n-1}^{0}$ formulas, the induction/bounding hypothesis $\mathsf{B}\Pi_{n-1}^{0}$ ensures that $c$ can be expressed by a $\Sigma_{n}^{0}$ formula. Therefore, $c$ is a total $\Sigma_{n}^{0}$-function.

Next, we define $d\colon\mathbb{N}\to\mathbb{N}$ by
\[
d(v):=(c(v))_0.
\]
Since $d$ is the composition of $c$ and a total computable function, $d$ is also a total $\Sigma_{n}^{0}$-function. Furthermore, by the definition of $c$, $(c(v))_0<w$ holds, meaning that $d$ is a total function with a bounded range. Thus, we can apply $\Sigma_{n}^{0}\text{-}\mathsf{RT}_{<\infty}^{1-}$ to $d$, which gives:
\begin{equation}   \exists x < w \, \forall a \, \exists b > a \, \forall y < b \, \exists z \, \neg\theta(x,y,z)
\end{equation}
Then,
\begin{equation}
\exists x < w \, \forall y \, \exists z \, \neg\theta(x,y,z)
\end{equation}
However, this contradicts \eqref{lem:equivalence-1}. Therefore, by contradiction, the backward direction holds.

Next, we show the forward direction $(\rightarrow)$. Assume $\mathsf{B}\Sigma_{n+1}^{0}$. Fix $k$ and let $\varphi$ be an arbitrary total $\Sigma_{n}^{0}$-function $\varphi\colon\mathbb{N}\to\{0,1,\dots,k-1\}$. We will show by contradiction that there exists a color that appears infinitely often. Suppose for contradiction that this does not hold, i.e.,
\[
\forall i < k \, \exists x \, \forall z \, (z > x \rightarrow \neg\varphi(z,i)).
\]
Since the subformula $\forall z \, (z > x \rightarrow \neg\varphi(z,i))$ is $\Pi_{n}^{0}$, $\mathsf{B}\Sigma_{n+1}^{0}$ implies:
\[
\exists v \, \forall i < k \, \exists x < v \, \forall z \, (z > x \rightarrow \neg\varphi(z,i)).
\]
In particular, this implies:
\[
\exists v \, \forall i < k \, \forall z > v-1 \, \neg\varphi(z,i).
\]
Then,
\[
\exists v \, \forall i < k \, \neg\varphi(v,i).
\]
However, this contradicts our assumption that the range of $\Sigma_{n}^{0}$-function $\varphi$ is contained in $\{0,1,\dots,k-1\}$. Thus, by contradiction, $\Sigma_{n}^{0}\text{-}\mathsf{RT}_{<\infty}^{1-}$ holds, which completes the proof of $\mathsf{RCA}_0+\mathsf{B}\Sigma_{n}^{0}\vdash\mathsf{B}\Pi_{n}^{0}\leftrightarrow\Sigma_{n}^{0}\text{-}\mathsf{RT}_{<\infty}^{1-}$.

\item Finally, we show that for all natural numbers $\ell\ge 1$,
\[ \mathsf{RCA}_0+\mathsf{B}\Sigma_{n}^{0}\vdash\Sigma_{n}^{0}\text{-}\mathsf{RT}_{<\infty,\ell}^{1-}\leftrightarrow\Sigma_{n}^{0}\text{-}\mathsf{RT}_{<\infty}^{1-}.
\]
Since the direction $(\leftarrow)$ is trivial, we focus on $(\rightarrow)$. Suppose $\Sigma_{n}^{0}\text{-}\mathsf{RT}_{<\infty,\ell}^{1-}$ holds. For any $k$ and total $\Sigma_{n}^{0}$-function $\varphi\colon\mathbb{N}\to\{0,1,\dots,k-1\}$, we have:

\begin{equation}
\exists s \subseteq \{0,1,\dots,k-1\} \, (|s|\le\ell \land \forall a \, \exists b \, [a < b \land \exists i \in s \, \varphi(b,i)]).
\end{equation}

We assume $\Sigma_{n}^{0}\text{-}\mathsf{RT}_{<\infty}^{1-}$ fails. Then, there exist $k$ and a total $\Sigma_{n}^{0}$-function $\varphi\colon\mathbb{N}\to\{0,1,\dots,k-1\}$ such that:

\begin{equation}
\forall i < k \, \exists a \, \forall b \, (a < b \rightarrow \neg\varphi(b,i)). \label{lem:equivalence-4}
\end{equation}

We fix those $\varphi$ and $\ell$ and let $s \subseteq \{0,1,\dots,k-1\}$ with $|s|\le\ell$ be the set obtained by applying $\Sigma_{n}^{0}\text{-}\mathsf{RT}_{<\infty,\ell}^{1-}$ for $\varphi$.
Then, for each $i \in s$, let $a_i$ be a witness satisfying the condition of \eqref{lem:equivalence-4}. Now, we define $m:=\max\{a_i \mid i\in s\}$. (Since the size of $s$ is at most $\ell$, the existence of $m$ can be demonstrated within $\mathsf{RCA}_0$.)
Then the following holds:

\begin{equation}
\forall m' > m \, \forall i \in s \, \neg\varphi(m',i). \label{lem:equivalence-5}
\end{equation}

On the other hand, by the assumption of $\Sigma_{n}^{0}\text{-}\mathsf{RT}_{<\infty,\ell}^{1-}$, there must exist a value such that:
\[
\exists m' > m \, \exists i \in s \, \varphi(m',i)
\]
which contradicts \eqref{lem:equivalence-5}. Thus, by contradiction, we obtain $(\rightarrow)$.
\end{enumerate}
\end{proof}

Here, to prove Theorem \ref{thm:main}, we employ the following two lemmas.

\begin{lemma}[Limit Lemma]\label{lem:limit}
For any natural number $n\ge 1$, over $\mathsf{RCA}_0+\mathsf{B}\Sigma_{n}^{0}$, for all $k$ and any total $\Sigma_{n}^{0}$-function $\varphi\colon\mathbb{N}\to\{0,1,\dots,k-1\}$, the following holds:
\begin{multline}
\exists g\colon[\mathbb{N}]^{n}\to\{0,1,\dots,k-1\} \, \forall x \, \exists i < k \,
[\exists s_1 > x \, \forall t_1 > s_1 \, \exists s_2 > t_1 \dots \\ \exists s_{n-1} > t_{n-2} \, \forall t_{n-1} > s_{n-1} \, g(x,t_1,\dots,t_{n-1})=i \land \varphi(x,i)].
\end{multline}
\end{lemma}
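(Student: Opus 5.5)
We prove the statement by the standard iterated limit construction, by external induction on $n$, while keeping track of how much bounding is used at each stage. Write $\varphi(x,i)$ in normal form as $\exists u_1\forall u_2\cdots Q u_n\,\theta(x,i,u_1,\dots,u_n)$ with $\theta$ bounded; equivalently $\varphi(x,i)\leftrightarrow\exists u\,\psi(x,i,u)$ with $\psi\in\Pi^0_{n-1}$. For $n=1$ the statement just says that there is a $g\colon\mathbb{N}\to k$ with $\varphi(x,g(x))$ for all $x$. We define $g(x)$ as the first $i$ for which a witness $u$ with $\theta(x,i,u)$ is found in the search over pairs $\langle i,u\rangle$. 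Totality of $\varphi$ guarantees that the search halts, so $g$ exists by $\Delta^0_1$-comprehension. Uniqueness in $\forall x\,\exists! i<k\,\varphi(x,i)$ gives $\varphi(x,g(x))$.

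For $n\ge2$ we use a finite-stage approximation. At stage $t$, for each $i<k$ let $u_i(x,t)$ be the least $u<t$ whose $\Pi^0_{n-1}$ part $\psi(x,i,u)$ ``looks true'' at the level of $(n-1)$-fold approximations. We then set $g(x,t_1,\dots,t_{n-1})$ to be the $i$ whose current least apparent witness is smallest, with ties broken by $i$. In practice we do not nest this by hand. We apply the induction hypothesis, in its relativised or uniform form, to the $\Sigma^0_{n-1}$ predicate $\neg\psi$, i.e.\ to ``$u$ is refuted for $(x,i)$''. This produces a $g'$ on $[\mathbb{N}]^{n-1}$ whose iterated limit decides the relevant $\Sigma^0_{n-1}$ facts. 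We then add one more variable $t_1$ that searches over pairs $\langle i,u\rangle<t_1$ for the least pair not yet refuted according to $g'$ evaluated at later coordinates. Concretely, $g(x,t_1,t_2,\dots,t_{n-1})$ is the $i$-component of the least $\langle i,u\rangle<t_1$ such that $g'$ at $(\langle x,i,u\rangle,t_2,\dots,t_{n-1})$ says ``not refuted''.

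The key verification is the following. Let $i_0$ be the true colour of $x$ and $u_0$ its least witness. By totality and uniqueness, every pair $\langle i,u\rangle<\langle i_0,u_0\rangle$ is eventually refuted, i.e.\ satisfies $\neg\psi$, a $\Sigma^0_{n-1}$ statement, whereas $\langle i_0,u_0\rangle$ is never refuted. There are only boundedly many pairs below $\langle i_0,u_0\rangle$. We use $\mathsf{B}\Sigma^0_n$, equivalently $\mathsf{B}\Pi^0_{n-1}$, applied to the $\Pi^0_{n-1}$-style statements ``beyond $s$, the limit of $g'$ for pair $p$ is fixed''. This yields a single bound $s_1$ beyond which all those finitely many limiting behaviours have stabilised simultaneously. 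Beyond that bound, the alternating $\exists s_j\forall t_j$ pattern for $g$ follows from the pattern for $g'$, with the limits of all relevant pairs simultaneously correct. So $g$ converges in the iterated sense to $i_0$, and $\varphi(x,i_0)$ holds.

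We expect the main obstacle to be this bookkeeping with bounded induction. The induction hypothesis only gives the pattern for each fixed pair separately. Merging finitely many alternating quantifier prefixes $\exists s\forall t\exists\dots$ into one requires collecting witnesses at each level. We would handle this level by level, choosing at each $\exists s_j$ the maximum over the boundedly many pairs. That maximum exists because the relevant formulas are $\Sigma^0_{j}$-type with $j\le n$, so $\mathsf{B}\Sigma^0_n$ suffices. This is why the lemma is stated over $\mathsf{RCA}_0+\mathsf{B}\Sigma^0_n$ rather than with full induction. The definability of $g$ as a set is unproblematic, since it is computable from $g'$, and $g'$ exists by the inductive hypothesis.
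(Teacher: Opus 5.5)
Your plan has a genuine gap at the step where you apply the induction hypothesis to the $\Sigma^0_{n-1}$ predicate $\neg\psi$ (``$u$ is refuted for $(x,i)$''). You expect this to give a $g'$ on $[\mathbb{N}]^{n-1}$ whose iterated limit over $t_2,\dots,t_{n-1}$ decides that predicate. The hypothesis at level $n-1$ is about total $\Sigma^0_{n-1}$-functions. The characteristic function of a $\Sigma^0_{n-1}$ predicate is in general only a total $\Sigma^0_n$-function, since its graph is a disjunction of a $\Sigma^0_{n-1}$ and a $\Pi^0_{n-1}$ condition. That is exactly the kind of object the lemma at level $n$ is about, and $n-2$ limit variables are one too few to decide a $\Sigma^0_{n-1}$ predicate. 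This already fails for $n=2$, where $g'$ is just a set with no limit variables. Take $\varphi$ to be the characteristic function of the halting set $K$. In any normal form $x\notin K\leftrightarrow\exists u\,\psi(x,0,u)$ with $\psi\in\Pi^0_1$, the predicate $\psi(x,0,u)$ cannot be computable, or else $K$ would be computable. So in the $\omega$-model of computable sets your $g'$ does not exist. The missing idea is that the stage variable $t_1$ must enter the refutation test itself. You should ask whether $\langle i,u\rangle$ is refuted \emph{by stage $t_1$}, i.e.\ bound the outer universal quantifier of $\psi$ by $t_1$, which lowers the complexity by one level.

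A second problem is that you minimise over candidate pairs outside $g'$. This forces you to merge the iterated-limit patterns of boundedly, possibly nonstandardly, many pairs. Your justification for the merging is not correct. After $\exists s_j$, the formula $\forall t_j>s_j\,\exists s_{j+1}>t_j\cdots\forall t_{n-1}>s_{n-1}$ has $2(n-j)-1$ alternating unbounded quantifiers over the set $g'$; it is not $\Sigma^0_j$-type. So taking maxima level by level is not justified by $\mathsf{B}\Sigma^0_n$ once $n$ is moderately large.

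The paper avoids both problems by putting the minimisation inside a single function. In your indexing, write $\varphi(x,y)\equiv\exists a\,\forall b\,\theta(a,b,x,y)$ with $\theta\in\Sigma^0_{n-2}$. Set $h(x,s)=\min\{\langle a,y\rangle : y<k\land\forall b<s\,\theta(a,b,x,y)\}$, which is a total $\Sigma^0_{n-1}$-function using $\mathsf{B}\Sigma^0_{n-1}$ and $\mathsf{L}\Sigma^0_{n-1}$. Since $h(x,\cdot)$ is non-decreasing and bounded by the true witness pair, $\Sigma^0_{n-1}\text{-}\mathsf{RT}^{1-}_{<\infty}$ (equivalent to $\mathsf{B}\Sigma^0_n$ by Theorem~\ref{thm:equivalence}) gives a single stabilisation point $s_1$. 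By uniqueness, the stable value carries the true colour. The hypothesis at level $n-1$ is then applied once, to the total $\Sigma^0_{n-1}$-function $f(\langle x,t\rangle)=(h(x,x+t))_1$, and $g$ is obtained by re-indexing $g'(\langle x,t_1\rangle,\dots)$. For $t_1>s_1$ the hypothesis is used only pointwise at the single argument $\langle x,t_1\rangle$, so no merging of patterns is ever needed.
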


\begin{lemma}\label{lem:hom_eval}
For any natural number $n \ge 1$, let $k$ be a natural number, $\varphi$ be an arbitrary total $\Sigma_{n}^{0}$-function $\varphi\colon\mathbb{N}\to\{0,1,\dots,k-1\}$, $g\colon[\mathbb{N}]^{n}\to\{0,1,\dots,k-1\}$ be an approximation of $\varphi$ taken by Lemma \ref{lem:limit} and $H$ be an $\ell$-homogeneous set for $g$ with a color set $s \subseteq \{0,1,\dots,k-1\}$ of size $\ell$. In this case, $H$ is also an $\ell$-homogeneous set of $\varphi$.
\end{lemma}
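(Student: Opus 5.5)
We have to show that every $h\in H$ satisfies $\exists i\in s\,\varphi(h,i)$. Infinitude of $H$ and the bound $|s|\le\ell$ are inherited unchanged. The idea is to feed $g$ only tuples whose first coordinate is $h$ and whose remaining coordinates are taken from $H$, then pass to the limit.

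Fix $h\in H$ and apply Lemma \ref{lem:limit} with $x=h$. This gives $i<k$ with $\varphi(h,i)$ and
\[
\exists s_1>h\,\forall t_1>s_1\,\exists s_2>t_1\cdots\exists s_{n-1}>t_{n-2}\,\forall t_{n-1}>s_{n-1}\; g(h,t_1,\dots,t_{n-1})=i .
\]
Since $\varphi$ defines a total function, this $i$ is the unique value with $\varphi(h,i)$. So it suffices to show $i\in s$.

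To do this, play the alternating game against the quantifier prefix using elements of $H$. Let $s_1$ be the witness given by the prefix. Because $H$ is infinite (unbounded), choose $t_1\in H$ with $t_1>s_1$. The prefix then supplies $s_2>t_1$, and again we choose $t_2\in H$ with $t_2>s_2$. We continue for $n-1$ steps. This is a standard-length iteration, since $n$ is a metatheoretic natural number, so no induction inside the model is needed; we simply unfold the $n-1$ quantifier alternations explicitly.

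The outcome is $h<t_1<\dots<t_{n-1}$, all in $H$, with $g(h,t_1,\dots,t_{n-1})=i$. Since $\{h,t_1,\dots,t_{n-1}\}\in[H]^n$ and $H$ is $\ell$-homogeneous for $g$ with color set $s$, we get $i\in s$, and hence $\exists i\in s\,\varphi(h,i)$.

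The case $n=1$ is degenerate: there the lemma gives $g(h)=i$ directly, and the same conclusion follows.

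The main point to check is formal rather than combinatorial. The homogeneity condition in Definition \ref{def:2}(2) for $\varphi$ is a $\forall h\in H$ statement that we establish pointwise. The argument above proves it for arbitrary $h$ without any induction or comprehension, so it goes through in $\mathsf{RCA}_0$, using only the hypotheses of Lemma \ref{lem:limit} (in particular $\mathsf{B}\Sigma^0_n$, which is already assumed in that lemma's setting). The other thing to be careful about is making $t_j>s_j$ strict, so that the tuple is increasing and genuinely lies in $[H]^n$.
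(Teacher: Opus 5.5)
Your proposal is correct and matches the paper's proof: you instantiate the limit-lemma property of $g$ at $h\in H$, then unwind the $n-1$ quantifier alternations by picking elements of $H$ above each witness $s_j$ to get a tuple in $[H]^n$ with $g$-value $i$. Homogeneity of $H$ for $g$ then gives $i\in s$. Your additional remarks on uniqueness of $i$, the degenerate case $n=1$, and the fact that the iteration has standard length are harmless clarifications of the same argument.
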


First, we prove Theorem \ref{thm:main} using these lemmas.
\begin{proof}[Proof of Theorem \ref{thm:main}]
We proceed with the proof of both $(1)$ and $(2)$ simultaneously by induction on the standard natural number $n\ge 1$.

\begin{enumerate}[label=(\roman*),topsep=0.5em,itemsep=0.5em]
  \item \textbf{Case $n=1$:} By Theorem \ref{thm:equivalence}, for all natural numbers $\ell \ge 1$, we have
  \[ \mathsf{RCA}_0\vdash\mathsf{B}\Sigma_{2}^{0}\leftrightarrow\mathsf{RT}_{<\infty,\ell}^{1}\leftrightarrow\Sigma_{1}^{0}\text{-}\mathsf{RT}_{<\infty,\ell}^{1}.
  \]
  Therefore, the theorem holds for $n=1$.

  \item \textbf{Inductive step:} Assume that the theorem holds for a given $n \ge 1$; we will show that it holds for $n+1$.
  By the inductive hypothesis, $\mathsf{RCA}_0\vdash\mathsf{RT}_{<\infty,\ell}^{n}\rightarrow\mathsf{B}\Sigma_{n+1}^{0}$.
  Also, it easily follows that $\mathsf{RCA}_0\vdash\mathsf{RT}_{<\infty,\ell}^{n+1}\rightarrow\mathsf{RT}_{<\infty,\ell}^{n}$. Therefore, we can freely assume $\mathsf{B}\Sigma_{n+1}^{0}$ in our context.

  Furthermore, by Theorem \ref{thm:equivalence}, for each natural number $\ell \ge 1$,
  \[\mathsf{RCA}_0\vdash\Sigma_{n+1}^{0}\text{-}\mathsf{RT}_{<\infty,\ell}^{1-}\leftrightarrow\mathsf{B}\Sigma_{n+2}^{0}. \]
  Thus, regarding $(2)$, it suffices to show that
  \[\mathsf{RCA}_0\vdash\mathsf{RT}_{<\infty,\ell}^{n+1}\rightarrow\Sigma_{n+1}^{0}\text{-}\mathsf{RT}_{<\infty,\ell}^{1-}.
  \]
 (Furthermore, (1) is also shown naturally in the process of proving (2).) Now, let $k$ be a natural number and $\varphi(x,y)$ be an arbitrary total $\Sigma_{n+1}^{0}$-function $\varphi\colon\mathbb{N}\to\{0,1,\dots,k-1\}$. Then, by Lemma \ref{lem:limit}---whose validity can be verified using $\mathsf{B}\Sigma_{n+1}^{0}$---it follows that there exists a $g\colon[\mathbb{N}]^{n+1}\to\{0,1,\dots,k-1\}$ that is an approximation of $\varphi$ taken by Lemma \ref{lem:limit}.

  By assuming $\mathsf{RT}_{<\infty,\ell}^{n+1}$, we obtain an $\ell$-homogeneous set $H$ and a subset $s \subseteq \{0,1,\dots,k-1\}$ of size $\ell$. By Lemma \ref{lem:hom_eval}, it holds that
  \[
  \forall h \in H \, \exists i \in s \, \varphi(h,i).
  \]
  Therefore, we obtain $(1)$.
  Also, since $H$ is an infinite set, it follows that
  \[
   \forall x \, \exists h' > x \, \exists i \in s \, \varphi(h',i).
   \]
  Then, we have established $\Sigma_{n+1}^{0}\text{-}\mathsf{RT}_{<\infty,\ell}^{1-}$.
  Consequently,
  \[\mathsf{RCA}_0\vdash\mathsf{RT}_{<\infty,\ell}^{n+1}\rightarrow\mathsf{B}\Sigma_{n+2}^{0}.
  \]
  Therefore, we obtain $(2)$.
\end{enumerate}

By induction, we obtain both $(1)$ and $(2)$.
\end{proof}

Next, we prove the lemmas.

\begin{proof}[Proof of Lemma \ref{lem:limit}]

This can be proved in the same way as the standard limit lemma. However, from the perspective of reverse mathematics, note that the bounding principle is used in the course of the proof.

We proceed by induction on the standard natural number $n \ge 1$.
\begin{enumerate}[label=(\roman*),topsep=0.5em,itemsep=0.5em]
  \item \textbf{Case $n=1$:} When a $\Sigma_{1}^{0}$ formula satisfies the uniqueness, it is $\Delta_{1}^{0}$-definable. Then, by $\Delta_{1}^{0}\text{-}\mathsf{CA}_0$, $\varphi$ exists as a computable total function, and the lemma immediately follows for $n=1$. (In the case of $n=1$, this effectively constitutes a proof of the standard graph theorem.)

  \item \textbf{Inductive step:} Assume that the lemma holds for $n$; we will show it holds for $n+1$ under $\mathsf{B}\Sigma_{n+1}^0$.

  Let $\varphi(x,y)\equiv\exists a \, \forall b \, \theta(a,b,x,y)$ where $\theta$ is a $\Sigma_{n-1}^{0}$ formula.
  Then, we fix $k$ and assume that $\varphi$ is a total $\Sigma_{n+1}^{0}$-function $\varphi\colon\mathbb{N}\to\{0,1,\dots,k-1\}$.

  We then define $h\colon[\mathbb{N}]^2\to\mathbb{N}$ by $h(x,s):=\min\{\langle a,y \rangle \mid y<k \land \forall b < s \, \theta(a,b,x,y) \}$.

  Since this formula involves only bounded quantification and logical operations on $\Sigma_{n-1}^{0}$ formulas, it is defined by a $\Sigma_{n}^{0}$ formula by $\mathsf{B}\Sigma_{n}^{0}$. Furthermore, since a minimum value exists by $\mathsf{L}\Sigma_{n}^{0}$ (which follows from $\mathsf{B}\Sigma_{n+1}^{0}$), $h$ is a total function.

  Now, we fix $x_0$ arbitrarily. By assumption regarding $\varphi$, there exist $a'$ and $y'<k$ such that $\forall b \, \theta(a',b,x_0,y')$. Since $\forall b < s \, \theta(a',b,x_0,y')$ holds for any $s$, the definition of $h$ implies that $h(x_0,s)\le\langle a',y' \rangle$ holds for any $s>x_0$.

  Then, we define $h'\colon\mathbb{N}\to\mathbb{N}$ by $h'(e)=h(x_0,x_0+e+1)$. Here, $h'$ is a total $\Sigma_{n}^{0}$-function with a bounded range. Using Theorem \ref{thm:equivalence}, $\mathsf{B}\Sigma_{n+1}^{0} \equiv \Sigma_{n}^{0}\text{-}\mathsf{RT}_{<\infty}^{1-}$, we can show that $h'$ has a homogeneous value $i<k$. (Since $h'(e)$ is bounded and monotonically non-decreasing, the existence of a color that appears infinitely often implies that $h'$ becomes fixed at a constant value for sufficiently large values.)

  Then, since $h'$ is monotonically non-decreasing, there exists some $m$ such that for any $m' \ge m$, $h'(m')$ stabilizes at the value $\langle a_0,y_0 \rangle$.

  Therefore, by the definition of $h'$, it holds that:
  \begin{equation}
   \forall s \, \forall b < s \, \theta(a_0,b,x_0,y_0). \label{lem:limit-1}
   \end{equation}
  In particular, we obtain $\exists a \, \forall b \, \theta(a,b,x_0,y_0)$. By the uniqueness assumption on $\varphi$, it follows that $y_0=y'$.

  We define $f'\colon[\mathbb{N}]^2\to\{0,1,\dots,k-1\}$ by $f'(x,e)=(h(x,x+e))_1$. By the above argument, $f'$ is a total $\Sigma_{n}^{0}$-function with a bounded range, and it satisfies:
\begin{equation}
 \forall x \, \exists i < k \, [\exists s_1 > x \, \forall t_1 > s_1 \, f'(x,t_1)=i \land \varphi(x,i)]. \label{lem:limit-2}
 \end{equation}
  Next, we define $f\colon\mathbb{N}\to \{0,1,\dots,k-1\}$ by $f(x)=f'((x)_0,(x)_1)$. (Note that we choose a pairing function $\langle \cdot, \cdot \rangle$ whose first and second projections are monotonically non-decreasing, and $(x)_0$ and $(x)_1$ represent the first and second components of $x$, respectively.)

  Since this is a total $\Sigma_{n}^{0}$-function with a bounded range, by the inductive hypothesis, there exists a $g'\colon[\mathbb{N}]^{n}\to \{0,1,\dots,k-1\}$ which is an approximation of the formula "$f(x)=i$":
  \begin{multline} \forall x \, \big[\exists i < k \, \exists s_2 > x \, \forall t_2 > s_2 \dots \exists s_{n} > t_{n-1} \, \forall t_{n} > s_{n} \, \\
  g'(x,t_2,\dots,t_n)=i=f(x)\big]. \label{lem:limit-3}
  \end{multline}
  Then, by the definition of $f'$, it follows that:
  \begin{equation}
   \forall t \, \big[f(\langle x,t \rangle)=i \leftrightarrow f'(x,t)=i\big]. \label{lem:limit-4}
   \end{equation}
  For an arbitrary $x$, we can choose $s_1>x$ and $i<k$ satisfying \eqref{lem:limit-2}. Taking an arbitrary $t_1 > s_1$ and applying \eqref{lem:limit-3} and \eqref{lem:limit-4} to $\langle x,t_1 \rangle$, we have:
  \begin{multline}
\exists i < k \, \big[\exists s_2 > \langle x,t_1 \rangle \, \forall t_2 > s_2 \dots \exists s_{n} > t_{n-1} \, \forall t_{n} > s_{n}\\
g'(\langle x,t_1 \rangle,t_2,\dots,t_n)=f(\langle x,t_1 \rangle)=f'(x,t_1)=i\big]. \label{lem:limit-5}
  \end{multline}

  By the conditions of $t_1$, \eqref{lem:limit-2}, and \eqref{lem:limit-5}, it follows that:
  \begin{multline}
\forall x \, \exists i < k \, \big[\exists s_1 > x \, \forall t_1 > s_1 \, \exists s_2 > t_1 \dots \exists s_n > t_{n-1} \, \forall t_n > s_n \\
g'(\langle x,t_1 \rangle,t_2,\dots,t_n)=i \land \varphi(x,i) \big]. \label{lem:limit-6}
  \end{multline}
Now, we define $g\colon[\mathbb{N}]^{n+1}\to\{0,1,\dots,k-1\}$ as
\[
g(x_0,\dots,x_n):=g'(\langle x_0,x_1 \rangle, \langle x_0,x_1 \rangle +x_2, \dots ,\langle x_0,x_1 \rangle+x_2+ \dots +x_{n-1}+x_n).
\]
By \eqref{lem:limit-6}, this $g$ is the desired function.
 \end{enumerate}
\end{proof}

\begin{proof}[Proof of Lemma \ref{lem:hom_eval}]
Fix $k$ and a $\Sigma_{n}^{0}$-function $\varphi\colon\mathbb{N}\to\{0,1,\dots,k-1\}$, and let $g$ be the approximation of $\varphi$ taken by Lemma \ref{lem:limit} and let $H$ be an $\ell$-homogeneous set of $g$. Then, by definition of $g$, there exists $i<k$ such that:
\begin{multline}
\big[\exists s_1 > h_0 \, \forall t_1 > s_1 \dots \exists s_{n-1} > t_{n-2} \, \forall t_{n-1} > s_{n-1} \, \\
g(h_0,t_1,\dots,t_{n-1})=i\big] \land \varphi(h_0,i). \label{lem:hom_eval-1}
\end{multline}
We fix $i<k$ that satisfies the above.
By choosing such an $s_1>h_0$ satisfying \eqref{lem:hom_eval-1}, the following holds:
\begin{multline}
 \big[\forall t_1 > s_1 \dots \exists s_{n-1} > t_{n-2} \, \forall t_{n-1} > s_{n-1} \\
 g(h_0,t_1,\dots,t_{n-1})=i\big] \land \varphi(h_0,i). \label{lem:hom_eval-2}
 \end{multline}
Since $H$ is infinite, there exists $h_1\in H$ such that $s_1<h_1$. By \eqref{lem:hom_eval-2}, in particular, this $h_1$ satisfies:
\begin{multline}
\big[\exists s_2 > h_1 \, \forall t_2 > s_2 \dots \exists s_{n-1} > t_{n-2} \, \forall t_{n-1} > s_{n-1} \\
g(h_0,h_1,\dots,t_{n-1})=i\big] \land \varphi(h_0,i).
\end{multline}
We take $s_2$ satisfying the condition above. Since $H$ is infinite, we can find $h_2\in H$ such that $s_2<h_2$. Repeating this process $n-1$ times yields an $n$-tuple $\langle h_0,\dots,h_{n-1} \rangle\in[H]^{n}$ such that:
\[
 g(h_0,\dots,h_{n-1})=i \land \varphi(h_0,i).
 \]
Then, since $H$ is an $\ell$-homogeneous set of $g$, there is a set $s\subseteq\{0,1,\dots,k-1\}$ whose size is $\ell$ and $i \in s$.
By the definition of an $\ell$-homogeneous set, the above result holds for the same $s$, regardless of the choice of $h_0\in{H}$.

Therefore, we can conclude that $H$ is also an $\ell$-homogeneous set of $\varphi$.
\end{proof}

This completes the proofs of all the Theorems and Lemmas.
\section{Conclusions and future work}
In this paper, we discussed the relationship between mathematical induction and the thin set theorem. We showed that for a fixed $n$, the thin set theorem $\mathsf{RT}_{<\infty,\ell}^{n}$ allows for the derivation of $\mathsf{B}\Sigma_{n+1}^{0}$, independently of $\ell$. However, it is not yet known whether this is the optimal lower bound regarding induction. Specifically, the following open question remains:

\begin{question}
For any natural numbers $n \ge 1$ and $\ell \ge 1$ such that $d_{n-1}<\ell$, does the following hold?
\[ \mathsf{RCA}_0\not\vdash\mathsf{RT}_{<\infty,\ell}^{n}\rightarrow\mathsf{I}\Sigma_{n+1}^{0} \]
\end{question}
By Cholak and Patey \cite{Cholak-Patey}, when $3\le n$ and $\ell < d_{n-1}$, $\mathsf{RT}_{<\infty,\ell}^{n}$ is equivalent to $\mathsf{ACA}_0$, which easily implies $\mathsf{I}\Sigma_{n+1}^{0}$. For this reason, we restrict our attention to the case $d_{n-1}<\ell$.

For $n=1$, Theorem \ref{thm:equivalence} shows that $\mathsf{RCA}_0\vdash\mathsf{RT}_{<\infty,\ell}^{1}\leftrightarrow\mathsf{B}\Sigma_{2}^{0}$ for all natural numbers $\ell \ge 1$, which implies $\mathsf{RCA}_0\not\vdash\mathsf{RT}_{<\infty,\ell}^{1}\rightarrow\mathsf{I}\Sigma_{2}^{0}$. For $n=2$, Slaman and the second author \cite{Slaman-Yokoyama} found that $\mathsf{RCA}_0\not\vdash\mathsf{RT}_{<\infty,\ell}^{2}\rightarrow\mathsf{I}\Sigma_{3}^{0}$ for any natural number $\ell\ge 1$.

Therefore, the case $n \ge 3$ presents the most fundamental question. If the pattern observed for $n=1$ and $n=2$ extends to $n \ge 3$, this question would have a positive answer. The relationship between $\mathsf{RT}_{<\infty,\ell}^n$ and the hierarchy of induction principles is summarized in Figure \ref{fig:hierarchy}.

\begin{figure}[htbp]
\centering
\begin{tikzpicture}[
    scale=0.7, transform shape,
    >=stealth,
    circled/.style = {draw, ellipse, align=center, inner sep=6pt, font=\large, thick, fill=gray!5}
]
    \node[circled] (rt) at (0, 0) {$\mathsf{RT}_{<\infty,\ell}^n$};
    \node[circled] (isigma) at (5, 1.5) {$\mathsf{I}\Sigma_{n+1}^0$};
    \node[circled] (bsigma) at (5, -1.5) {$\mathsf{B}\Sigma_{n+1}^0$};
    
    \node (dots_top) at (5, 2.7) {$\vdots$};
    \node (dots_bot) at (5, -2.5) {$\vdots$};
    \draw[->, red, thick] (rt) -- node[above=4pt, red, sloped] {?} (isigma);
    \draw[->, blue, thick] (rt) -- node[above=4pt, blue, sloped] {our result} (bsigma);
    \draw[->, thick] (isigma) -- (bsigma);
    
\end{tikzpicture}

\caption{Inductive strength of $\mathsf{RT}_{<\infty,\ell}^n$ and the hierarchy of induction/bounding principles over $\mathsf{RCA}_0$.}
\label{fig:hierarchy}
\end{figure}

Furthermore, we propose another open question:
\begin{question}
 For any natural numbers $n, \ell \ge 1$, does there exist a natural number $m$ satisfying the following?
 \[ \mathsf{RCA}_0\vdash\mathsf{RT}_{\ell+1,\ell}^{n+1}\rightarrow\mathsf{RT}_{<\infty,m}^{n} \]
\end{question}
Note that for all natural numbers $n, \ell \ge 1$ and $\ell^{\prime}>\ell$, $\mathsf{RT}_{\ell^{\prime},\ell}^{n+1}$ is equivalent to $\mathsf{RT}_{\ell+1,\ell}^{n+1}$ over $\mathsf{RCA}_0$.
(Here, $\mathsf{RT}_{\ell^{\prime},\ell}^{n+1}$ asserts that every coloring $c\colon[\mathbb{N}]^{n+1}\rightarrow\{0,1,\dots,\ell^{\prime}-1\}$ admits an infinite set $H$ satisfying $|c([H]^{n+1}) |\le \ell$.)

If this question has an affirmative answer, then by Theorem \ref{thm:main}, we could conclude that for all natural numbers $\ell$ and $n$, $\mathsf{RCA}_0\vdash\mathsf{RT}_{\ell+1,\ell}^{n+1}\rightarrow\mathsf{B}\Sigma_{n+1}^{0}$.
If $\mathsf{RT}_{\ell+1,\ell}^{n+1}$ derives $\mathsf{ACA}_0$, this question is trivial. There are known examples of $n,\ell$ where $\mathsf{RT}_{\ell+1,\ell}^{n+1}$ is not equivalent to $\mathsf{ACA}_0$ and there exists an $m$ satisfying $\mathsf{RCA}_0\vdash\mathsf{RT}_{\ell+1,\ell}^{n+1}\rightarrow\mathsf{RT}_{<\infty,m}^{n}$. For example, Dorais et al.~\cite[Propositions 5.6 and 5.7]{Dorais-Dzhafarov-Hirst-Mileti-Shafer} found $\mathsf{RCA}_0+\mathsf{RT}_{3,2}^{2}\vdash\mathsf{RT}_{<\infty}^{1}$ and $\mathsf{RCA}_0+\mathsf{RT}_{4,3}^{3}\vdash\mathsf{RT}_{<\infty}^{2}$. Our future goal is to solve these questions.

\bibliographystyle{plain}
\bibliography{reference}
\end{document}